\newtheorem{theorem}{Theorem}
\newtheorem{remark}[theorem]{Remark}
\newenvironment{proof}[1][Proof]{\noindent\textbf{#1.} }{\ \rule{0.5em}{0.5em}}
\numberwithin{equation}{section}
\begin{document}

\title{L\'{e}vy processes linked to the lower-incomplete gamma function }
\author{Luisa Beghin\thanks{%
Sapienza University of Rome, P.le A. Moro 5, 00185 Roma, Italy. e-mail:
\texttt{luisa.beghin@uniroma1.it}} \ \ \ \ \ Costantino Ricciuti\thanks{%
Sapienza University of Rome, P.le A. Moro 5, 00185 Roma, Italy. e-mail:
\texttt{costantino.ricciuti@uniroma1.it}} }
\date{}
\maketitle

\begin{abstract}
We start by defining a subordinator by means of the lower-incomplete gamma
function. It can be considered as an approximation of the stable
subordinator, easier to be handled thank to its finite activity. A tempered
version is also considered in order to overcome the drawback of infinite
moments. Then, we study L\'{e}vy processes time-changed by these
subordinators, with particular attention to the Brownian case. An
approximation of the fractional derivative (as well as of the fractional
power of operators) arises from the analysis of governing equations.
Finally, we show that time-changing the fractional Brownian motion gives a
model of anomalous diffusion, which exhibits a sub-diffusive behavior.
\vspace{0.5cm}

\noindent \emph{AMS Mathematical Subject Classification (2020)}: 33B20,
26A33, 60G51, 60J65, 34A08.
\end{abstract}

\section{Introduction}

In the spirit of \cite{BEG}, we consider here a subordinator $S_{\alpha
}(t),t\geq 0,$ defined by means of the lower-incomplete gamma function of
parameter $\alpha \in (0,1]$, i.e.
\begin{equation}
\gamma (\alpha ,x)=\int_{0}^{x}e^{-w}w^{\alpha -1}dw,\qquad x>0.
\label{incomplete gamma function}
\end{equation}%
We will see that, in the special case $\alpha =1,$ it reduces to a
homogeneous Poisson process, while, in general, it can be represented as a
compound Poisson process with positive jumps of size greater than one. Such
a process retains many properties of the stable subordinator, e.g. the tail
behavior of the distribution and the asymptotic form of the fractional
moments, even if it loses the property of self-similarity. A standard
reference for the theory of stable processes is \cite{SAM}.

By a slight modification we are led to a new subordinator whose jumps are
greater than $\epsilon >0$, which converges to a stable one in the limit for
$\epsilon \rightarrow 0$. We prove that its density $q_{\epsilon }(x,t)$
solves an equation where a perturbation of the Riemann fractional derivative
appears. When $\epsilon \rightarrow 0$, such operator reduces to the Riemann
derivative and we obtain the well known equation governing the stable
density.  For an introduction to fractional derivatives and fractional
equations consult \cite{MEE}.

The above framework can be extended to the so-called multivariate
subordinators, i.e. multidimensional L\'evy processes with increasing
marginal components (for their properties and applications see e.g. \cite%
{sato} and \cite{BEG2}).

In order to overcome the drawback of infinite moments of $S_{\alpha }$, we
consider a tempered version of our subordinator, say $S_{\alpha, \theta}(t),
t\geq 0$, where $\theta\geq 0$ is the tempering parameter, whose
distribution displays finite moments of any integer order.

We use these subordinators as independent random times of well-known L\'evy
processes. As for other subordinated processes already studied in the
literature, the time-change allows to keep some properties of the external
process and simultaneously modify some other features (consult \cite{SAT}
for the general theory).

When considering the process $B(S_{\alpha ,\theta }(t)),$ $t\geq 0,$ where $%
B:=\{B(t),$ $t\geq 0\}$ is a standard Brownian motion and $S_{\alpha ,\theta
}$ is supposed independent from $B$, we obtain the following auto-covariance
function
\begin{equation*}
Cov\left( B(S_{\alpha ,\theta }(t)),B(S_{\alpha ,\theta }(\tau ))\right)
=\alpha (t\wedge \tau )\theta ^{\alpha -1}e^{-\theta },\qquad t,\tau \geq 0.
\end{equation*}%
Even if it is linear in the time argument, as for the standard Brownian
motion, the parameters $\alpha $ and $\theta $ model the deviation from the
dependence structure of $B$: in particular, for $\theta \rightarrow 0$ and
for $\alpha $ strictly less than $1$, the auto-covariance tends to infinity,
for any $t.$

Finally, we consider a fractional Brownian motion subordinated by $S_{\alpha
}(t)$ (for basic notions on the fractional Brownian motion consult e.g. \cite%
{MAN}). We show that the model obtained still displays the long-range
dependence, with a rate depending not only on the Hurst index $H$, but also
on $\alpha .$ Moreover, it is proved to behave
asymptotically as a subdiffusion, according to the value of the parameter $%
\alpha $: the subdiffusive behavior is more marked the greater the value of $%
\alpha $ (for any fixed $H$). We recall that a process is said to be
subdiffusive if, for large times $t$, the mean square displacement grows as $%
t^\gamma$ with $\gamma <1$. We refer to \cite{MET} for an overview on
anomalous diffusion models and their applications.

\section{Basic notions and preliminary results}


We recall the following definition: a function $\varphi :(0,\infty
)\rightarrow \mathbb{R}$ is a Bernstein function if $\varphi $ is of class $%
C^{\infty },$ $\varphi (\eta )\geq 0,$ for any $\eta > 0$, and
\begin{equation}
(-1)^{n-1}\frac{d^{n}}{dx^{n}}\varphi (\eta )\geq 0,  \label{b1}
\end{equation}%
for any $n\in \mathbb{N}$ and $\eta >0.$ It is well known that any Bernstein
function $\varphi $ admits the following representation
\begin{equation}
\varphi (\eta )=a+b\eta +\int_{0}^{+\infty }(1-e^{-s\eta })\nu (ds),
\label{lk}
\end{equation}%
for $a,b\geq 0$ and where $\nu (\cdot )$ denotes a measure on $%
(0,+\infty )$ such that%
\begin{equation*}
\int_{0}^{+\infty }(s\wedge 1)\nu (ds)<\infty .
\end{equation*}%
The triplet $(a,b,\nu )$ is called the L\'{e}vy triplet of the Bernstein
function $\varphi $ (see, for example, \cite{SCH}, p.21) and $\nu (\cdot )$
is a L\'{e}vy measure.

Finally, a Bernstein function $\varphi $ is complete if and only if its L%
\'{e}vy measure in (\ref{lk}) has a completely monotone density $m(\cdot )$
with respect to the Lebesgue measure, i.e.the following representation holds
for a completely monotone function $m(\cdot )$:
\begin{equation}
\varphi (\eta )=a+b\eta +\int_{0}^{+\infty }(1-e^{-s\eta })m(s)ds.
\end{equation}

\subsection{Univariate subordinators}

\label{sottosezione univariate}

We now recall that a subordinator $S(t),$ $t\geq 0$, is a L\'{e}vy process
with non-decreasing paths and that, for any Bernstein function $\varphi $,
 there exists a subordinator $S(t)$ such that
\begin{equation*}
\mathbb{E}e^{-\eta S(t)}=e^{-t\varphi (\eta )}
\end{equation*}%
(see, for example, \cite{BER} and \cite{APP}). In the special case where $%
\varphi (\eta )=\eta ^{\alpha }$, for $\alpha \in (0,1),$ it is well-known
that $S:=\{S(t),t\geq 0\}$ is a $\alpha $-stable subordinator and its
density satisfies the following equation:
\begin{equation}
\frac{\partial }{\partial t}h(x,t)=-\frac{\partial ^{\alpha }}{\partial
x^{\alpha }}h(x,t),\qquad h(x,0)=\delta (x),  \label{riemann}
\end{equation}%
for $x,t\geq 0,$ where $\frac{\partial ^{\alpha }}{\partial x^{\alpha }}$ is
the Riemann-Liouville fractional derivative of order $\alpha ,$ defined  as%
\begin{equation*}
\frac{\partial ^{\alpha }}{\partial x^{\alpha }}f(x)=\left\{
\begin{array}{l}
\frac{1}{\Gamma (1-\alpha )}\frac{d}{dx}\int_{0}^{x}\frac{f(t)}{%
(x-t)^{\alpha }}dt,\qquad \alpha \in (0,1) \\
\frac{d}{dx}f(x),\qquad \alpha =1%
\end{array}%
\right. ,
\end{equation*}%
for a locally integrable function $f$ on $(0,+\infty )$ (see \cite{KIL},
p.70)$.$ This can be easily checked by considering formula (2.2.36) in \cite%
{KIL} and applying the Laplace transform to both members of (\ref{riemann}),
which gives $\widetilde{h}(\eta ,t)=e^{-t\eta ^{\alpha }}.$

\subsection{Multivariate subordinators}

\label{sottosezione multivariate}

In the multivariate case, we recall that a subordinator in the sense of \cite%
{sato} and \cite{BEG2} is a $d$-dimensional L\'{e}vy process with increasing
marginal components. We denote a multivariate subordinator by
\begin{equation*}
(S_{1}(t),S_{2}(t),\dots ,S_{d}(t)).
\end{equation*}%
The multivariate L\'{e}vy measure $\nu (dx_{1},\dots ,dx_{d})$ satisfies the
following condition
\begin{equation*}
\int_{\mathbb{R}_{+}^{d}}\min \biggl (1,\sqrt{x_{1}^{2}+\dots +x_{d}^{2}}%
\biggr)\,\nu (dx_{1},\dots ,dx_{d})<\infty ,
\end{equation*}%
where $\mathbb{R}_{+}^{d}=\{(x_{1},\dots ,x_{d})\in \mathbb{R}^{d}:x_{1}\geq
0,x_{2}\geq 0,\dots ,x_{d}\geq 0\}$.

Its $d$-dimensional Laplace transform reads
\begin{equation*}
\mathbb{E}e^{-(\eta _{1}S_{1}(t)+\eta _{2}S_{2}(t)+\dots +\eta
_{d}S_{d}(t))}=e^{-t\Phi (\eta _{1},\dots ,\eta _{d})},\qquad \eta _{1}\geq
0\dots \eta _{d}\geq 0,
\end{equation*}%
where
\begin{equation*}
\Phi (\eta _{1},\dots ,\eta _{d})=\int_{\mathbb{R}_{+}^{d}}\left[
1-e^{-(\eta _{1}x_{1}+\dots +\eta _{d}x_{d})}\right] \nu (dx_{1},\dots
,dx_{d})
\end{equation*}%
is a multivariate Bernstein function.

A $d$-dimensional subordinator is said to be stable if, using the spherical
variables $\rho \in (0,\infty )$ and $\theta \in B^{d-1}$ ($B^{d-1}$
denoting the $d-1$-dimensional unit sphere), its L\'{e}vy measure can be
expressed as

\begin{equation*}
\nu (d\rho ,d\theta )=C\rho ^{-\alpha -1}M(d\theta ),
\end{equation*}%
where $M(d\theta )$ is a probability measure on $B_{+}^{d-1}=B^{d-1}\cap
R_{+}^{d}$ . In other words, a $d$-dimensional stable subordinator\ is a
multivariate stable process with increasing marginal components.

In this case, the Bernstein function reads
\begin{equation*}
\Phi (\eta )=k\int_{B_{+}^{d-1}}(\theta \cdot \eta )^{\alpha }M(d\theta
),\qquad \eta =(\eta _{1},\dots ,\eta _{d}).
\end{equation*}%
By Laplace inversion, the density $q(x,t),x\in R_{+}^{d},$ $t\geq 0$ of a
multivariate stable subordinator satisfies the following equation
\begin{equation}
\frac{\partial }{\partial t}q(x,t)=-k\int_{B_{+}^{d-1}}(\nabla _{x}\cdot
\theta )^{\alpha }q(x,t)\,M(d\theta ),  \label{dir}
\end{equation}%
where $(\nabla _{x}\cdot \theta )^{\alpha }$ is the fractional directional
derivative along the unit vector $\theta $, defined as
\begin{equation*}
(\nabla _{x}\cdot \theta )^{\alpha }h(x):=k\,\int_{0}^{\infty }\bigl (%
h(x)-h(x-r\theta )\bigr )r^{-\alpha -1}dr.
\end{equation*}%
Thus the operator on the right-hand side of (\ref{dir}), also studied in
\cite{garra} and \cite{MEE}, is the average, under the measure $M(d\theta )$%
, of $(\nabla _{x}\cdot \theta )^{\alpha }$. For $d=2$ we have $\theta
=(\cos \beta ,\sin \beta )$, and the operator takes the following form
\begin{equation*}
-k\int_{0}^{\frac{\pi }{2}}\bigl (\cos \beta \frac{\partial }{\partial x_{1}}%
+\sin \beta \frac{\partial }{\partial x_{2}}\bigr )^{\alpha
}q(x_{1},x_{2},t)\,M(d\beta ).
\end{equation*}

\subsection{Fractional equation satisfied by the incomplete gamma function}

\label{sottosezione gamma}

The incomplete Gamma function defined in (\ref{incomplete gamma function})
is a Bernstein function. Indeed it is non-negative, $C^{\infty }$, null at
the origin, with derivatives satisfying
\begin{eqnarray*}
\frac{d}{d\eta }\gamma (\alpha ;\eta ) &=&e^{-\eta }\eta ^{\alpha -1}\geq 0,
\\
\frac{d^{2}}{d\eta ^{2}}\gamma (\alpha ;\eta ) &=&-\frac{d}{d\eta }\gamma
(\alpha ;\eta )+(\alpha -1)e^{-\eta }\eta ^{\alpha -2}\leq 0, \\
\frac{d^{3}}{d\eta ^{3}}\gamma (\alpha ;\eta ) &=&-\frac{d^{2}}{d\eta ^{2}}%
\gamma (\alpha ;\eta )-(\alpha -1)e^{-\eta }\eta ^{\alpha -2}+(\alpha
-1)(\alpha -2)e^{-\eta }\eta ^{\alpha -3}\geq 0,
\end{eqnarray*}%
and so on.

Preliminarily, we show that the lower-incomplete Gamma function (\ref%
{incomplete gamma function}) solves the following integro-differential
equation
\begin{equation}
\frac{\alpha }{\Gamma (1-\alpha )}\int_{0}^{x}\frac{d}{ds}u(x-s)\Gamma
(-\alpha ,s)ds=\Gamma (\alpha )-u(x),\qquad u(0)=0, \label{eq temp}
\end{equation}%
where $\Gamma (\beta ,x)=\int_{x}^{\infty }e^{-w}w^{\beta -1}dw$ is the
upper incomplete Gamma function (which is defined for any $\beta ,x\in
\mathbb{R}$ and is real-valued for $x\geq 0$). Up to a multiplication by $%
\alpha $, the operator on the left-side is the Caputo fractional derivative
with tempered kernel (see, e.g., \cite{KUM3}). We observe that (\ref{eq temp}%
) is a relaxation equation because the solution $u(x)=\gamma (\alpha ,x)$
converges to the stationary solution $\tilde{u}(x)=\Gamma (\alpha )$ as $%
x\rightarrow \infty $.

Let now $u:\mathbb{R}^{+}\rightarrow \mathbb{R}^{+}$ be an absolutely
continuous function, such that $|u(x)|\leq ce^{kx},$ for some $c,k>0$ and
for any $x\geq 0,$ then we define the operator
\begin{equation}
\mathcal{D}_{t}^{\lambda ,\rho }u(t):=\frac{\rho \lambda ^{\rho }}{\Gamma
(1-\rho )}\int_{0}^{t}\frac{d}{dt}u(t-s)\Gamma (-\rho ;\lambda s)ds,\qquad
\rho \in (0,1),\;\lambda >0.  \label{temp}
\end{equation}
It has been proved in \cite{BEG} that $f(t)=\Gamma (\rho ;\lambda t)$ is the
eigenfunction of the operator $\mathcal{D}_{t}^{\lambda ,\rho },$ i.e. that $%
\mathcal{D}_{t}^{\lambda ,\rho }f=-\lambda ^{\rho }f$. Then, by recalling
that $\Gamma (\alpha ;x)+\gamma (\alpha ;x)=\Gamma (\alpha )$, the Cauchy
problem (\ref{eq temp}) is easily verified. Indeed, $\mathcal{D}%
_{t}^{\lambda ,\rho }K=1$, for any $K\in \mathbb{R},$ by (\ref{temp}) and,
moreover, $\gamma (\alpha ;\cdot )$ is absolutely continuous on $\mathbb{R}%
^{+}$ and $|\gamma (\alpha ;x)|\leq \Gamma (\alpha )\leq \Gamma (\alpha
)e^{kx},$ for any $x,k\geq 0.$

As an alternative proof, we recall that the Laplace transform of (\ref{temp}%
) is given by%
\begin{equation}
\int_{0}^{+\infty }e^{-\theta t}\mathcal{D}_{t}^{\lambda ,\rho }u(t)dt=\left[
(\theta +\lambda )^{\rho }-\lambda ^{\rho }\right] \widetilde{u}(\theta )-%
\frac{\left[ (\theta +\lambda )^{\rho }-\lambda ^{\rho }\right] }{\theta }%
u(0),\qquad \theta >0  \label{lap}
\end{equation}%
(see \cite{BEG}); moreover,
\begin{equation*}
\int_{0}^{+\infty }e^{-\theta x}\gamma (\alpha ;x)dx=\int_{0}^{+\infty
}e^{-w}w^{\alpha -1}\int_{w}^{+\infty }e^{-\theta x}dxdw=\frac{\Gamma
(\alpha )}{\theta (\theta +1)^{\alpha }},
\end{equation*}%
so that the Laplace transforms of the two sides of (\ref{eq temp}) coincide.
We can easily check that, for $\alpha =1,$ the equation (\ref{eq temp})
reduces to
\begin{equation*}
\frac{d}{dx}u(x)=1-u(x),
\end{equation*}%
which (for $u(0)=0$) is satisfied by $u(x)=1-e^{-x}=\gamma (1;x),$ even
though the expression of $\mathcal{D}_{t}^{\lambda ,\rho }$ given in (\ref%
{temp}) is not well-defined in this special case.

\section{The subordinator $S_{\protect\alpha }$}

\subsection{Definition and properties}

We start by considering the subordinator defined by means of the
lower-incomplete gamma function, i.e. with Laplace exponent $\alpha \gamma
(\alpha ;\eta ),$ for $\alpha \in (0,1]$.

\begin{theorem}
Let $\alpha \in (0,1],$ then the function%
\begin{equation}
\varphi (\eta ):=\alpha \gamma (\alpha ;\eta ), \qquad \eta \geq0  \label{b2}
\end{equation}%
is the Laplace exponent of a finite-activity (or step) subordinator $%
S_{\alpha }:=\left\{ S_{\alpha }(t),t\geq 0\right\} $, with triplet $%
(0,0,\pi ),$ where $\pi $ is an absolutely continuous L\'{e}vy measure, with
completely monotone density%
\begin{equation}
\overline{\pi }(z)=\frac{1_{z\geq 1}\alpha (z-1)^{-\alpha }z^{-1}}{\Gamma
(1-\alpha )}.  \label{b3}
\end{equation}
\end{theorem}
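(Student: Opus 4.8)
The plan is to show that $\varphi(\eta) = \alpha\gamma(\alpha;\eta)$ is a Bernstein function with trivial drift and killing, read off its Lévy measure from the Lévy--Khintchine representation (\ref{lk}), and verify the claimed density. First I would recall that the derivatives of $\gamma(\alpha;\cdot)$ displayed just above the theorem already show that $\varphi$ is $C^\infty$, non-negative, null at the origin; to get the full Bernstein property one needs $(-1)^{n-1}\varphi^{(n)}\geq 0$ for all $n$. Rather than grinding through all derivatives by hand, the cleaner route is to exhibit $\varphi$ directly in the form $\int_0^\infty (1-e^{-s\eta})\,\overline\pi(s)\,ds$ with $\overline\pi$ as in (\ref{b3}); since that candidate $\overline\pi$ is manifestly a non-negative function, this simultaneously proves the Bernstein (indeed complete Bernstein) property, identifies the triplet as $(a,b,\nu)=(0,0,\pi)$, and gives the explicit Lévy measure. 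So the heart of the argument is an integral identity.

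The key computation is therefore
\begin{equation*}
\alpha\gamma(\alpha;\eta) \;=\; \frac{\alpha}{\Gamma(1-\alpha)}\int_1^\infty \bigl(1-e^{-s\eta}\bigr)(s-1)^{-\alpha}s^{-1}\,ds .
\end{equation*}
I would verify this by Laplace transforms in $\eta$ (or equivalently by differentiating in $\eta$): differentiating the right-hand side in $\eta$ gives $\frac{\alpha}{\Gamma(1-\alpha)}\int_1^\infty e^{-s\eta}(s-1)^{-\alpha}\,ds$; substituting $s=1+w$ this becomes $\frac{\alpha e^{-\eta}}{\Gamma(1-\alpha)}\int_0^\infty e^{-w\eta}w^{-\alpha}\,dw = \frac{\alpha e^{-\eta}}{\Gamma(1-\alpha)}\cdot\Gamma(1-\alpha)\eta^{\alpha-1} = \alpha e^{-\eta}\eta^{\alpha-1}$, which is exactly $\frac{d}{d\eta}\bigl(\alpha\gamma(\alpha;\eta)\bigr)$ by (\ref{incomplete gamma function}); since both sides vanish at $\eta=0$ (the integral converges at $s=1$ because $\alpha<1$, and the integrand $1-e^{-s\eta}\to 0$), the identity follows. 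For $\alpha=1$ one checks separately that $\gamma(1;\eta)=1-e^{-\eta}$, which is the Laplace exponent of the Poisson process with unit jumps — consistent with $\overline\pi(z)=1_{z\geq1}\delta$-type behaviour in the limit, and indeed (\ref{b3}) should be read as giving total mass $\int_1^\infty \overline\pi(z)\,dz = \varphi(\infty) = \alpha\Gamma(\alpha) = \Gamma(\alpha+1)<\infty$ for $\alpha\in(0,1]$, which is the finite-activity statement.

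Then I would collect the remaining assertions. Finite activity: $\nu\bigl((0,\infty)\bigr)=\int_1^\infty\overline\pi(z)\,dz$ is finite, as just computed, so $S_\alpha$ is a compound Poisson process (a step subordinator); since the support of $\pi$ is $[1,\infty)$, the jumps have size $\geq 1$, matching the description in the introduction. Complete monotonicity of $\overline\pi$: write $\overline\pi(z)=\frac{\alpha}{\Gamma(1-\alpha)}(z-1)^{-\alpha}z^{-1}$ on $z>1$ and note it is a product of the two completely monotone-in-$(z-1)$... — more carefully, I would argue that $z\mapsto (z-1)^{-\alpha}$ and $z\mapsto z^{-1}$ are each completely monotone on $(1,\infty)$ (the first is a shifted power, the second a power), and a product of completely monotone functions is completely monotone; hence $\overline\pi$ is CM on its support, so $\varphi$ is a complete Bernstein function. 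The main obstacle is purely bookkeeping: making sure the integral $\int_1^\infty (s\wedge 1)\,\pi(ds)=\int_1^\infty \pi(ds)<\infty$ (trivially satisfied here since the support is away from $0$) and that the constant in (\ref{b3}) is pinned down correctly by the Laplace-transform check; there is no deep difficulty, only the risk of an error in the Beta/Gamma constants, so I would double-check via the known Laplace transform $\int_0^\infty e^{-\theta x}\gamma(\alpha;x)\,dx = \Gamma(\alpha)/(\theta(\theta+1)^\alpha)$ already recorded in the excerpt.
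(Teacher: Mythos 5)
Your proposal is correct and follows essentially the same route as the paper: both establish the L\'evy--Khintchine identity $\alpha\gamma(\alpha;\eta)=\int_1^\infty\bigl(1-e^{-s\eta}\bigr)\overline{\pi}(s)\,ds$ (you by differentiating in $\eta$ and substituting $s=1+w$, the paper by writing $1-e^{-\eta x}=x\int_0^\eta e^{-zx}dz$ and applying Fubini, both reducing to the same Gamma integral $\int_0^\infty e^{-zw}w^{-\alpha}dw=\Gamma(1-\alpha)z^{\alpha-1}$), then deduce finite activity from the finite total mass $\alpha\Gamma(\alpha)$ (you via $\varphi(\infty)$ and monotone convergence, the paper via the Beta integral) and verify complete monotonicity of the density. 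The only caveat is your aside that $\varphi$ is a \emph{complete} Bernstein function: the density is completely monotone only on its support $(1,\infty)$, not on all of $(0,\infty)$ (it vanishes on $(0,1)$ and jumps at $z=1$), so that stronger claim should be dropped --- but it is not needed for any assertion of the theorem.
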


\begin{proof}
The incomplete gamma function $\gamma (\alpha ,x)$ is a Bernstein function,
as explained in section \ref{sottosezione gamma}. Hence also $\alpha \gamma
(\alpha ,x)$ is a Bernstein function. We now prove that representation (\ref%
{lk}) holds, in this case, for $a=b=0$ and for the L\'{e}vy measure given in
(\ref{b3}): indeed we have that%
\begin{eqnarray*}
\int_{0}^{+\infty }(1-e^{-\eta x})\pi (dx) &=&\int_{0}^{+\infty
}x\int_{0}^{\eta }e^{-zx}dz\overline{\pi }(x)dx \\
&=&\int_{0}^{\eta }dz\int_{1}^{+\infty }xe^{-zx}\frac{\alpha (x-1)^{-\alpha
}x^{-1}}{\Gamma (1-\alpha )}dx \\
&=&\int_{0}^{\eta }e^{-z}dz\int_{0}^{+\infty }e^{-zw}\frac{\alpha w^{-\alpha
}}{\Gamma (1-\alpha )}dw \\
&=&\int_{0}^{\eta }\frac{\alpha e^{-z}}{z^{1-\alpha }}dz=\alpha \gamma
(\alpha ;\eta ),
\end{eqnarray*}%
where the interchange of the integrals' order is allowed by the absolute
convergence of the double integral and the application of the Fubini
theorem. In order to prove that $S_{\alpha }$ does not have strictly
increasing trajectories, we must show that the integral of the L\'{e}vy
measure on $(0,\infty )$ is finite. Indeed by (\ref{lk}) the last condition,
together with $a=b=0,$ is sufficient to prove that a subordinator is a step
process (i.e. it has piecewise sample paths), see \cite{SAT}, p.135: in this
case we have that%
\begin{equation}
\int_{0}^{+\infty }\pi (dz)=\int_{1}^{+\infty }\frac{\alpha (z-1)^{-\alpha
}z^{-1}}{\Gamma (1-\alpha )}dz=\frac{\alpha \Gamma (1-\alpha )\Gamma (\alpha
)}{\Gamma (1-\alpha )}=\alpha \Gamma (\alpha )<\infty ,  \label{ss}
\end{equation}%
by considering formula (3.191.2) of \cite{GRA}, since $\alpha >0.$ Finally,
it is easy to check, by differentiating, that the density of the L\'{e}vy
measure in (\ref{b3}) is completely monotone.
\end{proof}

\begin{remark}
In the limiting case where $\alpha \rightarrow 1^{-}$ the process $S_\alpha$
reduces to the Poisson process. Indeed we have that $\lim_{\alpha
\rightarrow 1^{-}}\pi (dz)=\delta _{1}(z)dz$, which is the L\'{e}vy measure
of the Poisson process of rate $1$: this can be seen by considering that%
\begin{equation*}
\lim_{\alpha \rightarrow 1^{-}}\varphi (\eta )=\gamma (1,\eta )=1-e^{-\eta
}=\int_{0}^{+\infty }(1-e^{-\eta x})\delta _{1}(x)dx.
\end{equation*}
\end{remark}

We underline that the L\'{e}vy measure given in (\ref{b3}) is different from
zero only for $z\geq 1$: this means that the subordinator performs almost surely
jumps of size greater than one. As a
consequence and by considering that its diffusion coefficient is zero, the
process $S_{\alpha }$ has also finite variation (see Theorem 21.9 in \cite%
{SAT}).

Moreover, the result in (\ref{ss}) implies that $S_{\alpha }$ is a L\'{e}vy
process of type A (see Def.11.9 in \cite{SAT}, p.65) and has finite
activity, i.e. its number of jumps is finite on every compact interval for
almost all the paths (see Thm. 21.3 in \cite{SAT}). Thus $S_{\alpha }$ can
be represented as a compound Poisson process
\begin{equation}
S_{\alpha }(t)=\sum_{j=1}^{N_{\alpha }(t)}Z_{j}^{\alpha },  \label{cp}
\end{equation}%
where $N_{\alpha }:=\left\{ N_{\alpha }(t),t\geq 0\right\} $ is a
homogeneous Poisson process with rate $\lambda =\alpha \Gamma (\alpha )$ and
the jumps $Z_{j}^{\alpha }$ are i.i.d. random variables, taking values in $%
[1,+\infty )$, with probability density
\begin{equation*}
f_{Z^{\alpha }}(z)=\frac{ (z-1)^{-\alpha }z^{-1}1_{z\geq 1}}{\Gamma
(1-\alpha )\Gamma (\alpha )}=\frac{\sin (\pi \alpha )}{\pi }\frac{1_{z\geq 1}%
}{(z-1)^{\alpha }z},\qquad \alpha \in (0,1).
\end{equation*}%
For $\alpha =1$, the jumps are unitary and the process coincides with the
standard Poisson. The representation (\ref{cp}) can be checked directly as
follows: the Laplace transform of the addends $Z_{j}^{\alpha }$ is given by%
\begin{equation}
\mathbb{E}e^{-\eta Z_{j}^{\alpha }}=\frac{\sin (\pi \alpha )}{\pi }%
\int_{1}^{+\infty }(z-1)^{-\alpha }z^{-1}e^{-\eta z}dz=\frac{\Gamma (\alpha
;\eta )}{\Gamma (\alpha )},  \label{pr}
\end{equation}%
for any $j=1,2,...,$ by formula (3.383.9) in \cite{GRA} for $\alpha <1.$
Then, by conditioning, we get%
\begin{eqnarray*}
\mathbb{E}e^{-\eta \sum_{j=1}^{N_{\alpha }(t)}Z_{j}^{\alpha }} &=&\exp
\left\{ -\alpha \Gamma (\alpha )t\left[ 1-\frac{\Gamma (\alpha ;\eta )}{%
\Gamma (\alpha )}\right] \right\} \\
&=&\exp \left\{ -t\alpha \gamma (\alpha ;\eta )\right\} .
\end{eqnarray*}%
Finally, we note that $S_{\alpha }$ is not self-similar, as can be checked
from its Laplace transform.

The moments of any integer order of $S_{\alpha } $ are not finite, for any $%
t>0$, since
\begin{equation}
\int_{1}^{+\infty }|x|^{k}\pi (dx)=\int_{1}^{+\infty }\frac{\alpha
(x-1)^{-\alpha }x^{k-1}}{\Gamma (1-\alpha )}dx  \label{pp}
\end{equation}%
does not converge, for any $k\geq 1,$ (see \cite{APP} p.132). Alternatively,
this can be seen by applying the Wald formula and by noting that $\mathbb{E}%
Z_{j}^{\alpha }=\frac{\sin (\pi \alpha )}{\pi }\int_{1}^{+\infty
}(z-1)^{-\alpha }dz=+\infty $, $j=1,2,...,$

The reason can be found in the heaviness of its distribution's tail. Indeed
it can be proved that it displays the same power law of the stable
subordinator, i.e. $P(X_{\alpha }(t)>x)\simeq \frac{tx^{-\alpha }}{\Gamma
(1-\alpha)}$ for large $x$ (see \cite{SAM}, p.17).

However we can study the asymptotic expression of the fractional moment of $%
S_{\alpha },$ of order $p\leq \alpha $ and for large $t.$ We recall that the
fractional moments have been introduced and studied by many authors, in
order to overcome the problem of infinite integer order moments, especially
in the stable case (see, among the others, \cite{MAT} and \cite{WOL}); in
particular, we will follow the techniques given in \cite{KUM}, which are
based on fractional differentiation of the Laplace transform.

\begin{theorem}
1) Let $\alpha \in (0,1),$ then, for any $t\geq 0$ and for $x\rightarrow
+\infty ,$ we have that%
\begin{equation}
P(S_{\alpha }(t)>x)\simeq \frac{tx^{-\alpha }}{\Gamma (1-\alpha )}.
\label{pst}
\end{equation}%
2) Let $p\in (0,1]$, then the fractional moment of order $p$ of the process $%
S_{\alpha }$ exists, finite, for $p\leq \alpha $ and it asymptotically
behaves as follows%
\begin{equation}
\mathbb{E}S_{\alpha }^{p}(t)\simeq \frac{\Gamma \left( 1-\frac{p}{\alpha }%
\right) }{\Gamma \left( 1-p\right) }t^{p/\alpha },\qquad t\rightarrow
+\infty .  \label{cor}
\end{equation}
\end{theorem}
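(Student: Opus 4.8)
The plan is to handle the two parts separately, but both through the Laplace transform $\mathbb{E}e^{-\eta S_\alpha(t)}=e^{-t\alpha\gamma(\alpha;\eta)}$ and its behaviour as $\eta\to 0^+$. Since $e^{-w}\to1$ as $w\to0$, one has $\alpha\gamma(\alpha;\eta)=\alpha\int_0^\eta e^{-w}w^{\alpha-1}dw\sim\eta^\alpha$ as $\eta\to0^+$, so that $1-\mathbb{E}e^{-\eta S_\alpha(t)}=1-e^{-t\alpha\gamma(\alpha;\eta)}\sim t\,\eta^\alpha$. For part 1) I would then invoke the Karamata-type Tauberian theorem for tails of non-negative random variables: if $1-\mathbb{E}e^{-\eta X}\sim c\,\eta^\alpha$ with $\alpha\in(0,1)$, then $P(X>x)\sim\frac{c}{\Gamma(1-\alpha)}x^{-\alpha}$; applying this with $c=t$ yields \eqref{pst}. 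Alternatively, one can argue directly from the compound Poisson representation \eqref{cp}: the jump law $f_{Z^\alpha}$ is regularly varying of index $-\alpha$, hence subexponential, so $P(S_\alpha(t)>x)\sim\mathbb{E}[N_\alpha(t)]\,P(Z^\alpha>x)=\alpha\Gamma(\alpha)\,t\cdot\frac{\sin(\pi\alpha)}{\pi\alpha}\,x^{-\alpha}$, which equals $\frac{t}{\Gamma(1-\alpha)}x^{-\alpha}$ after using the reflection formula $\frac{\sin(\pi\alpha)}{\pi}=\frac{1}{\Gamma(\alpha)\Gamma(1-\alpha)}$.

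For part 2) I would start from the elementary identity $x^p=\frac{p}{\Gamma(1-p)}\int_0^\infty(1-e^{-\eta x})\,\eta^{-1-p}\,d\eta$, valid for $x\ge0$ and $p\in(0,1)$, which is the integral counterpart of the fractional-differentiation-of-the-Laplace-transform technique of \cite{KUM}. Taking expectations and applying Tonelli's theorem gives
\begin{equation*}
\mathbb{E}S_\alpha^p(t)=\frac{p}{\Gamma(1-p)}\int_0^\infty\frac{1-e^{-t\alpha\gamma(\alpha;\eta)}}{\eta^{1+p}}\,d\eta .
\end{equation*}
Finiteness is read off the integrand: near $\eta=0$ it behaves as $t\,\eta^{\alpha-1-p}$, integrable exactly when $p<\alpha$, while near $\eta=\infty$ it is $O(\eta^{-1-p})$, integrable since $p>0$; hence the fractional moment is finite for $p<\alpha$ (the threshold $p=\alpha$ being excluded, consistently with the power law of part 1)).

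To get the asymptotics as $t\to\infty$, substitute $\eta=s\,t^{-1/\alpha}$, obtaining
\begin{equation*}
\mathbb{E}S_\alpha^p(t)=\frac{p}{\Gamma(1-p)}\,t^{p/\alpha}\int_0^\infty\frac{1-e^{-t\alpha\gamma(\alpha;\,s t^{-1/\alpha})}}{s^{1+p}}\,ds .
\end{equation*}
The crude bound $\gamma(\alpha;\varepsilon)\le\varepsilon^\alpha/\alpha$ (drop the factor $e^{-w}\le1$) gives $t\alpha\gamma(\alpha;s t^{-1/\alpha})\le s^\alpha$ uniformly in $t$, so the integrand is dominated by $(1-e^{-s^\alpha})\,s^{-1-p}\le\min(1,s^\alpha)\,s^{-1-p}$, which is integrable on $(0,\infty)$ for $0<p<\alpha$; moreover $t\alpha\gamma(\alpha;s t^{-1/\alpha})\to s^\alpha$ pointwise in $s$. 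Dominated convergence then yields
\begin{equation*}
\mathbb{E}S_\alpha^p(t)\sim\frac{p}{\Gamma(1-p)}\,t^{p/\alpha}\int_0^\infty\frac{1-e^{-s^\alpha}}{s^{1+p}}\,ds ,\qquad t\to+\infty .
\end{equation*}
Finally, the substitution $u=s^\alpha$ combined with $\int_0^\infty(1-e^{-u})u^{-1-q}\,du=\Gamma(1-q)/q$ for $0<q<1$ (integration by parts) gives $\int_0^\infty(1-e^{-s^\alpha})\,s^{-1-p}\,ds=\Gamma(1-p/\alpha)/p$, and collecting the constants produces exactly \eqref{cor}.

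The only genuinely delicate point is the dominated-convergence step in the last display, but the explicit majorant coming from $\gamma(\alpha;\varepsilon)\le\varepsilon^\alpha/\alpha$ makes this routine; likewise, in part 1) the work is entirely in quoting the right Tauberian (or subexponentiality) statement, after which the identification of the constant is a short computation with the Gamma function and the reflection formula.
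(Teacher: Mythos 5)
Your proof is correct. Part 1) follows the paper's own route almost verbatim: expand $1-\mathbb{E}e^{-\eta S_\alpha(t)}\sim t\eta^\alpha$ via $\gamma(\alpha;\eta)\sim\eta^\alpha/\alpha$ and invoke the Karamata/Feller Tauberian theorem (the paper cites Feller, Thm.~4, p.~446, applied to $\int_0^\infty e^{-\eta x}P(S_\alpha(t)>x)\,dx$); your alternative via subexponentiality of the regularly varying jump law, with the constant recovered from the reflection formula, is a valid extra argument the paper does not give. Part 2) is where you genuinely diverge. The paper uses the representation $\mathbb{E}S_\alpha^p(t)=-\frac{1}{\Gamma(1-p)}\int_0^\infty\frac{d}{d\eta}\bigl[e^{-t\alpha\gamma(\alpha;\eta)}\bigr]\eta^{-p}\,d\eta$ (the derivative form of your identity, related to it by integration by parts) and then extracts the $t\to\infty$ behaviour by the Laplace--Erdelyi theorem, which requires expanding $h(\eta)=\alpha\gamma(\alpha;\eta)$ and $\varphi(\eta)=e^{-\eta}\eta^{\alpha-p-1}$ at $\eta=0$ and identifying the coefficient $c_0=b_0/(\mu a_0^{\gamma/\mu})$. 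You instead work with $\frac{p}{\Gamma(1-p)}\int_0^\infty(1-e^{-t\alpha\gamma(\alpha;\eta)})\eta^{-1-p}\,d\eta$, rescale $\eta=st^{-1/\alpha}$, and pass to the limit by dominated convergence with the explicit, $t$-uniform majorant coming from $\gamma(\alpha;\varepsilon)\le\varepsilon^\alpha/\alpha$; the constant then comes from the elementary integral $\int_0^\infty(1-e^{-s^\alpha})s^{-1-p}\,ds=\Gamma(1-p/\alpha)/p$. Your route is more elementary and self-contained (no appeal to the Laplace-method coefficient machinery of Wojdylo/Wong), and it delivers the finiteness claim for free from the same integral representation; the paper's route, in exchange, yields in principle a full asymptotic expansion rather than just the leading term. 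One further point in your favour: you correctly note that the argument (like the paper's, which needs $\gamma=\alpha-p>0$) covers only $p<\alpha$, and indeed by the tail in part 1) the moment of order exactly $\alpha$ is infinite, so the theorem's ``$p\le\alpha$'' should be read as $p<\alpha$.
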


\begin{proof}
We can write, for $\eta \rightarrow 0,$%
\begin{eqnarray*}
\int_{0}^{+\infty }e^{-\eta x}P(S_{\alpha }(t)>x)dx &=&\frac{1-\mathbb{E}%
e^{-\eta S_{\alpha }(t)}}{\eta }=\frac{1-e^{-t \alpha \gamma (\alpha ;\eta )}%
}{\eta } \\
&\simeq & t \eta ^{\alpha -1},
\end{eqnarray*}%
where we have taken the Taylor series expansion (up to the first order) and
we have considered the asymptotic behavior of the lower incomplete gamma
function, i.e.%
\begin{equation}
\gamma (\alpha ;\eta )\simeq \frac{\eta ^{\alpha }}{\alpha },\qquad \eta
\rightarrow 0.  \label{sss}
\end{equation}%
Formula (\ref{sss}) can be easily derived by rewriting (\ref{incomplete
gamma function}) as follows:%
\begin{equation*}
\gamma (\alpha ;x)=x^{\alpha }\int_{0}^{1}e^{-xw}w^{\alpha -1}dw.
\end{equation*}%
By applying the Tauberian theorem (see \cite{FEL}, Thm.4, p.446) we get, for
any $t\geq 0$, result (\ref{pst}).

In order to derive the asymptotic behavior of the fractional moment of order
$p$, we apply the Laplace-Erdelyi Theorem to the following integral%
\begin{eqnarray*}
\mathbb{E}S_{\alpha }^{p}(t) &=&-\frac{1}{\Gamma \left( 1-p\right) }%
\int_{0}^{+\infty }\frac{d}{d\eta }\left[ e^{-t\alpha \gamma (\alpha ;\eta )}%
\right] \eta ^{-p}d\eta  \\
&=&\frac{\alpha t}{\Gamma \left( 1-p\right) }\int_{0}^{+\infty }e^{-\eta
-t\alpha \gamma (\alpha ;\eta )}\eta ^{\alpha -p-1}d\eta ,
\end{eqnarray*}%
(see \cite{WOI}, for details). Let $x\in (x_{0}$,$x_{1})$, with $x_{0}$,$%
x_{1}\in \mathbb{R}$, let moreover $h(x)$ and $\varphi (x)$\ be independent
of $t>0$ and $h(x)>h(x_{0})$ for all $x\in (x_{0},x_{1})$. Moreover, let the
following expansions hold, for $x\rightarrow x_{0}^{+},$ $h(x)\sim
h(x_{0})+\sum_{k=0}^{\infty }a_{k}(x-x_{0})^{k+\mu },$ $\mu \in \mathbb{R}%
^{+},$ $a_{0}\neq 0$, and $\varphi (x)\sim \sum_{k=0}^{\infty
}b_{k}(x-x_{0})^{k+\gamma -1},$ $\gamma \in \mathbb{R}^{+},$ $b_{0}\neq 0.$
Then
\begin{equation}
I(t):=\int_{x_{0}}^{x_{1}}\varphi (x)e^{-th(x)}dx\sim
e^{-th(x_{0})}\sum_{j=0}^{\infty }\frac{c_{j}}{t^{\frac{\gamma +j}{\mu }}}%
\Gamma \left( \frac{\gamma +j}{\mu }\right) ,\qquad t\rightarrow +\infty ,
\label{LE}
\end{equation}%
under the assumption that the integral (with finite or infinite delimiters)
converges absolutely for all sufficiently large $t$. We only need $%
c_{0}=b_{0}/\mu a_{0}^{\gamma /\mu }$, then, for the expressions of the
other $c_{j}$'s we refer to \cite{WOI} and \cite{WON}. In our case, we have
that $\varphi (x):=e^{-x}x^{\alpha -p-1}=\sum_{k=0}^{\infty }\frac{%
(-1)^{k}x^{k+\alpha -p-1}}{k!}$, so that $\gamma =\alpha -p>0$, for $%
p<\alpha $, and $b_{0}=1.$ On the other hand, we get $h(x):=\alpha \gamma
(\alpha ;x)=\alpha \gamma (\alpha ;0)+\alpha \sum_{k=0}^{\infty }\frac{%
(-1)^{k}x^{k+\alpha }}{k!(\alpha +k)},$ by using the well-known series
expression of the incomplete gamma function (see \cite{JAM}). Thus we get $%
\mu =\alpha $ and $a_{0}=1.$ By considering (\ref{LE}) we thus get%
\begin{equation*}
\mathbb{E}S_{\alpha }^{p}(t)\sim \frac{\alpha t}{\Gamma \left( 1-p\right) }%
\sum_{j=0}^{\infty }\frac{c_{j}}{t^{\frac{\alpha -p+j}{\alpha }}}\Gamma
\left( \frac{\alpha -p+j}{\alpha }\right) \sim \frac{\alpha c_{0}t^{p/\alpha
}\Gamma \left( 1-\frac{p}{\alpha }\right) }{\Gamma \left( 1-p\right) },
\end{equation*}%
which coincides with (\ref{cor}).
\end{proof}

\begin{remark}
The fractional moment of order $p$ converges, for $t\rightarrow +\infty $,
to the value obtained in the stable case, for any $t$ (see \cite{MATS}).
\end{remark}

\subsection{Link to stable subordinators}

\paragraph{The one-dimensional case}

\label{approssimazione subordinatore stabile}

We now purpose a slight generalization of the previous results, in order to
provide an approximation of a stable subordinator: while the previously
defined subordinator $S_{\alpha }$ performs jumps greater than $1$, we now
consider a lower bound for the jump size equal to $\epsilon >0$.

We thus define the following L\'{e}vy measure with support on $(\epsilon
,\infty )$ and with density
\begin{equation*}
\pi _{\epsilon }(x)=\frac{\alpha }{\Gamma (1-\alpha )}(x-\epsilon )^{-\alpha
}x^{-1}1_{x\geq \epsilon }.
\end{equation*}%
The corresponding Laplace exponent has the form
\begin{equation*}
\varphi _{\varepsilon }(\eta )=\frac{\alpha }{\epsilon ^{\alpha }}\,\gamma
(\alpha ;\eta \epsilon ).
\end{equation*}%
Indeed,
\begin{align}
\varphi _{\epsilon }(\eta )& =\int_{0}^{\infty }(1-e^{-\eta x})\pi
_{\epsilon }(x)dx  \notag \\
& =\int_{\epsilon }^{\infty }(1-e^{-\eta x})\frac{\alpha }{\Gamma (1-\alpha )%
}(x-\epsilon )^{-\alpha }x^{-1}dx  \notag \\
& =\int_{\epsilon }^{\infty }dx\frac{\alpha }{\Gamma (1-\alpha )}(x-\epsilon
)^{-\alpha }x^{-1}\int_{0}^{\eta }xe^{-xz}dz  \notag \\
& =\int_{0}^{\eta }dz\int_{0}^{\infty }\frac{\alpha y^{-\alpha }}{\Gamma
(1-\alpha )}e^{-z(y+\epsilon )}\,dy  \notag \\
& =\frac{\alpha }{\epsilon ^{\alpha }}\int_{0}^{\eta \epsilon
}e^{-w}w^{\alpha -1}dw.  \label{calcoli esponente di laplace}
\end{align}%
By a simple change of variable, the Laplace exponent can also be expressed
as $\eta ^{\alpha }$ multiplied by a correction factor depending on $\epsilon $:
\begin{equation*}
\varphi _{\epsilon }(\eta )=\eta ^{\alpha }\cdot O_{\epsilon }(\eta )
\end{equation*}%
where
\begin{equation}
O_{\epsilon }(\eta )=\frac{\alpha }{\epsilon ^{\alpha }}\int_{0}^{\epsilon
}e^{-\eta y}y^{\alpha -1}dy  \label{o piccolo}
\end{equation}%
is such that $O_{\epsilon }(\eta )\rightarrow 1,$ as $\epsilon \rightarrow 0$%
. Thus, in the limit as $\epsilon \rightarrow 0$, the related subordinator $%
S_{\alpha }^{(\varepsilon )}:=\left\{ S_{\alpha }^{(\varepsilon )}(t),t\geq
0\right\} $ converges to a $\alpha $-stable subordinator, since
\begin{align*}
& \pi _{\epsilon }(x)\rightarrow \frac{\alpha }{\Gamma (1-\alpha )}%
x^{-\alpha -1}1_{x\geq 0} \\
& \varphi _{\epsilon }(\eta )\rightarrow \eta ^{\alpha }.
\end{align*}%
By considering that
\begin{equation*}
\int_{0}^{\infty }\pi _{\epsilon }(x)dx=\alpha \Gamma (\alpha )\epsilon ^{-\alpha },
\end{equation*}%
we can conclude that $S_{\alpha }^{(\varepsilon )}$ is a compound Poisson
process, i.e.
\begin{equation*}
S_{\alpha }^{(\varepsilon )}(t)=\sum_{j=1}^{N^{\epsilon }(t)}Z_{j}^{\epsilon
}
\end{equation*}%
where $N^{\epsilon }(t)$ is a Poisson process with intensity $\alpha \Gamma
(\alpha )\epsilon ^{-\alpha }$ and $Z_{j}^{\epsilon }$ has density
\begin{equation*}
f_{Z_{j}^{\varepsilon }}(z)=\frac{\epsilon ^{\alpha }}{\Gamma (\alpha
)\Gamma (1-\alpha )}(z-\epsilon )^{-\alpha }z^{-1}1_{z\geq \epsilon }.
\end{equation*}%
Thus $S_{\alpha }^{(\varepsilon )}$ is a compound Poisson approximation of a
stable subordinator. Therefore, it can be useful in many applications,
since it is easier to be handled with respect to the stable subordinator,
thanks to its finite activity.

As far as the governing equation is concerned, we can show that the
transition density of $S_{\alpha }^{(\varepsilon )}$ satisfies a fractional
equation, which generalizes the governing equation (\ref{riemann}) of the
stable subordinator. In particular, the fractional derivative on the right
side is corrected by means of the following operator
\begin{align}
O_{\epsilon }\bigl (\frac{\partial }{\partial x}\bigr)h(x)& :=\frac{\alpha }{%
\epsilon ^{\alpha }}\int_{0}^{\epsilon }e^{-y\partial _{x}}h(x)y^{\alpha
-1}dy  \notag \\
& =\frac{\alpha }{\epsilon ^{\alpha }}\int_{0}^{\epsilon }h(x-y)y^{\alpha
-1}dy,  \label{op}
\end{align}%
where $h:\mathbb{R}^{+}\rightarrow \mathbb{R}$ is a function such that the
above integral converges, while $e^{-y\partial _{x}}$ denotes (with a little
abuse of notation), the translation operator.

Note that (\ref{op}) tends to the identity operator as $\epsilon \rightarrow
0$, since%
\begin{equation*}
\lim_{\epsilon \rightarrow 0}O_{\epsilon }\bigl (\frac{\partial }{\partial x}%
\bigr)h(x)=h(x).
\end{equation*}%
Thus we can check that the density $q_{\varepsilon }:=q_{\varepsilon }(x,t),$
$x,t\geq 0,$ of $S_{\alpha }^{(\varepsilon )}$ solves the following equation
\begin{equation*}
\frac{\partial }{\partial t}q_{\epsilon }(x,t)=-\frac{\partial ^{\alpha }}{%
\partial x^{\alpha }}O_{\epsilon }\bigl (\frac{\partial }{\partial x}\bigr)%
q_{\epsilon }(x,t)\qquad q_{\epsilon }(x,0)=\delta (x),
\end{equation*}%
by applying the Laplace transform to both members, which gives
\begin{equation*}
\widetilde{q}_{\epsilon }(\eta ,t)=e^{-\eta ^{\alpha }O_{\epsilon }(\eta )t},
\end{equation*}%
where $O_{\epsilon }(\eta )$ has been defined in (\ref{o piccolo}).

\begin{remark}
The approximation presented above could be applied to the fractional derivative with
time-dependent order, i.e. $\bigl( \frac{\partial}{\partial x}\bigr )^{\alpha (t)} $,
where $\alpha (t)$ takes values in $(0,1)$. Such operator governs a time-inhomogeneous
version of the stable subordinator (see, for example, \cite{BEG3} and \cite{RIC}), which
could be approximated by considering the time-dependent L\'{e}vy measure
$\pi _{\epsilon }(x,t)=\frac{\alpha (t) }{\Gamma (1-\alpha (t) )}(x-\epsilon )^{-\alpha (t)
}x^{-1}1_{x\geq \epsilon }.$
\end{remark}

\paragraph{The multivariate case}

Following the lines of the one-dimensional case, we look for a compound
Poisson approximation for a multivariate stable subordinator, which we
introduced in sect. \ref{sottosezione multivariate}. We define the family of
L\'{e}vy measures
\begin{equation*}
\nu _{\epsilon }(d\rho ,d\theta )=C(\rho -\epsilon )^{-\alpha }\rho
^{-1}M(d\theta )\qquad \epsilon >0
\end{equation*}%
and, by the same calculations as in (\ref{calcoli esponente di laplace}), we
obtain the following family of Bernstein functions (the symbol $\eta $
denotes the vector $(\eta _{1},\dots ,\eta _{d})$ and $\cdot $ denotes the
scalar product)
\begin{align*}
\Phi _{\epsilon }(\eta )& =k\int_{0}^{\infty }d\rho \int_{B_{+}^{d-1}}\bigl (%
1-e^{-\rho \,\eta \cdot \theta })(\rho -\epsilon )^{-\alpha }\rho
^{-1}M(d\theta ) \\
& =k\int_{B_{+}^{d-1}}(\theta \cdot \eta )^{\alpha }\,O_{\epsilon }(\eta
\cdot \theta )M(d\theta )
\end{align*}%
where the corrective term
\begin{equation*}
O_{\epsilon }(\eta \cdot \theta )=\frac{\alpha }{\epsilon ^{\alpha }}%
\int_{0}^{\epsilon }e^{-\eta \cdot \theta y}y^{\alpha -1}dy
\end{equation*}%
tends to $1$ as $\epsilon \rightarrow 0$. By Laplace inversion, the density $%
q_{\epsilon }(x,t)$ of our process satisfies
\begin{equation*}
\frac{\partial }{\partial t}q_{\epsilon }(x,t)=-k\int_{B_{+}^{d-1}}(\nabla
_{x}\cdot \theta )^{\alpha }O_{\epsilon }(\theta \cdot \nabla
_{x})q_{\epsilon }(x,t)\,M(d\theta )
\end{equation*}%
where
\begin{align*}
O_{\epsilon }(\theta \cdot \nabla _{x})h(x):= \frac{\alpha }{\epsilon
^{\alpha }}\int_{0}^{\epsilon } e^{-y \theta \cdot \nabla _x} h(x)y^{\alpha -1}dy
\\
=\frac{\alpha }{\epsilon ^{\alpha }}\int_{0}^{\epsilon }h(x-y\theta
)y^{\alpha -1}dy
\end{align*}%
tends to the identity operator in the limit $\epsilon \rightarrow 0$.

\section{The tempered subordinator $S_{\protect\alpha ,\protect\theta }$}

In order to avoid the inconvenience of infinite moments of $S_{\alpha }$, we
define a tempered counterpart of the latter.

\begin{theorem}
Let $\eta ,\theta >0$ and $\alpha \in (0,1],$ then the function
\begin{equation}
\varphi _{\theta }(\eta ):=\alpha \gamma (\alpha ;\eta +\theta )-\alpha
\gamma (\alpha ;\theta ),  \label{tm}
\end{equation}%
is the Laplace exponent of a tempered subordinator $S_{\alpha ,\theta
}:=\left\{ S_{\alpha ,\theta }(t),t\geq 0\right\} $, with L\'{e}vy triplet $%
(0,0,\pi _{\theta })$ and (absolutely continuous) L\'{e}vy measure $\pi
_{\theta }$, with density%
\begin{equation}
\overline{\pi }_{\theta }(z)=\frac{1_{z\geq 1}\alpha (z-1)^{-\alpha
}z^{-1}e^{-\theta z}}{\Gamma (1-\alpha )}.  \label{tm2}
\end{equation}%
The sample paths of $S_{\alpha ,\theta }$ are not strictly increasing; the
mean and variance of $S_{\alpha ,\theta }$ read, respectively,%
\begin{align}
\mathbb{E}S_{\alpha ,\theta }(t)& =t\,\alpha \theta ^{\alpha -1}e^{-\theta }
\label{mv} \\
\mathbb{V}arS_{\alpha ,\theta }(t)& =t\,\alpha \theta ^{\alpha -1}e^{-\theta
}+\alpha (1-\alpha )t\theta ^{\alpha -2}e^{-\theta }.  \notag
\end{align}
\end{theorem}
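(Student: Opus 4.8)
The plan is to view $\varphi_{\theta}$ as an \emph{exponential tempering} of the Laplace exponent $\varphi(\eta)=\alpha\gamma(\alpha;\eta)$ of $S_{\alpha}$ obtained in the first theorem of this section. First I would record the elementary fact that if $\varphi$ is any Bernstein function and $\theta\geq 0$, then $\eta\mapsto\varphi(\eta+\theta)-\varphi(\theta)$ is again a Bernstein function: it vanishes at the origin, is non-negative (being non-decreasing), and for every $n\geq 1$ its $n$-th derivative equals $\varphi^{(n)}(\eta+\theta)$, so the alternating-sign condition (\ref{b1}) is inherited verbatim. Applying this to $\varphi(\eta)=\alpha\gamma(\alpha;\eta)$ --- which is Bernstein by section \ref{sottosezione gamma} --- shows that $\varphi_{\theta}$ in (\ref{tm}) is a Bernstein function, hence the Laplace exponent of some subordinator $S_{\alpha,\theta}$.

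Next I would identify the L\'{e}vy triplet. Writing $\overline{\pi}$ for the L\'{e}vy density (\ref{b3}) of $S_{\alpha}$, one has $\overline{\pi}_{\theta}(z)=e^{-\theta z}\overline{\pi}(z)$, whence, using $e^{-\theta z}-e^{-(\eta+\theta)z}=(1-e^{-(\eta+\theta)z})-(1-e^{-\theta z})$,
\begin{equation*}
\int_{0}^{+\infty}(1-e^{-\eta z})\overline{\pi}_{\theta}(z)\,dz
=\int_{0}^{+\infty}(1-e^{-(\eta+\theta)z})\overline{\pi}(z)\,dz-\int_{0}^{+\infty}(1-e^{-\theta z})\overline{\pi}(z)\,dz .
\end{equation*}
Both integrals on the right are finite (they are already finite for $S_{\alpha}$) and, by the computation in the proof of the first theorem, equal $\alpha\gamma(\alpha;\eta+\theta)$ and $\alpha\gamma(\alpha;\theta)$ respectively; their difference is exactly $\varphi_{\theta}(\eta)$. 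Hence (\ref{lk}) holds with $a=b=0$ and L\'{e}vy measure $\pi_{\theta}$ of density (\ref{tm2}). Moreover
\begin{equation*}
\int_{0}^{+\infty}\pi_{\theta}(dz)=\int_{1}^{+\infty}\frac{\alpha(z-1)^{-\alpha}z^{-1}e^{-\theta z}}{\Gamma(1-\alpha)}\,dz\leq\alpha\Gamma(\alpha)<\infty
\end{equation*}
by (\ref{ss}), so, together with $a=b=0$, the sample paths are piecewise constant (see \cite{SAT}, p.135) and in particular not strictly increasing; in fact $S_{\alpha,\theta}$ is again a compound Poisson process, with jump rate $\varphi_{\theta}(+\infty)=\alpha\Gamma(\alpha;\theta)$.

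Finally, for the moments the crucial observation is that the tempering factor $e^{-\theta z}$ makes $\int_{1}^{+\infty}z^{k}\overline{\pi}_{\theta}(z)\,dz<\infty$ for every $k\in\mathbb{N}$, so $S_{\alpha,\theta}(t)$ has finite moments of all integer orders and $\eta\mapsto\mathbb{E}e^{-\eta S_{\alpha,\theta}(t)}=e^{-t\varphi_{\theta}(\eta)}$ is smooth in a neighbourhood of $\eta=0$. Differentiating, $\mathbb{E}S_{\alpha,\theta}(t)=t\,\varphi_{\theta}'(0)$ and $\mathbb{V}ar\,S_{\alpha,\theta}(t)=-t\,\varphi_{\theta}''(0)$. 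Using $\frac{d}{d\eta}\gamma(\alpha;\eta)=e^{-\eta}\eta^{\alpha-1}$ from section \ref{sottosezione gamma} I obtain
\begin{equation*}
\varphi_{\theta}'(\eta)=\alpha e^{-(\eta+\theta)}(\eta+\theta)^{\alpha-1},\qquad
\varphi_{\theta}''(\eta)=\alpha e^{-(\eta+\theta)}(\eta+\theta)^{\alpha-2}\bigl(\alpha-1-\eta-\theta\bigr),
\end{equation*}
and evaluating at $\eta=0$ gives $\varphi_{\theta}'(0)=\alpha\theta^{\alpha-1}e^{-\theta}$ and $-\varphi_{\theta}''(0)=\alpha(1-\alpha)\theta^{\alpha-2}e^{-\theta}+\alpha\theta^{\alpha-1}e^{-\theta}$, which are precisely the expressions in (\ref{mv}). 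I expect no serious obstacle: the only slightly delicate points are the explicit L\'{e}vy--Khintchine identification and the justification of differentiating under the expectation, both routine given the earlier results, and the spot most prone to a slip is the sign bookkeeping in $\varphi_{\theta}''$.
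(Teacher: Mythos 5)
Your proposal is correct and follows essentially the same route as the paper: the Bernstein property inherited from Theorem 1, the explicit L\'{e}vy--Khintchine identification with $a=b=0$ and density (\ref{tm2}), finiteness of the L\'{e}vy measure to conclude the paths are step functions (hence not strictly increasing), and mean/variance obtained by differentiating the Laplace transform at $\eta=0$. The only cosmetic differences are that you verify the L\'{e}vy--Khintchine identity by telescoping $(1-e^{-\eta z})e^{-\theta z}=(1-e^{-(\eta+\theta)z})-(1-e^{-\theta z})$ and reusing the untempered computation, whereas the paper redoes the Fubini calculation directly with the tempered density, and that you bound the total mass by $\alpha\Gamma(\alpha)$ while the paper evaluates it exactly as $\alpha\Gamma(\alpha;\theta)$.
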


\begin{proof}
It is immediate to check that (\ref{tm}) is a Bernstein function (as a
consequence of Theorem 1). Moreover we can prove that the representation (%
\ref{lk}) holds, in this case, for $a=b=0$ and for the L\'{e}vy density
given in (\ref{tm2}): indeed we have that%
\begin{eqnarray*}
\int_{0}^{+\infty }(1-e^{-\eta x})\overline{\pi }_{\theta }(x)dx
&=&\int_{0}^{+\infty }x\int_{0}^{\eta }e^{-zx}dz\overline{\pi }_{\theta
}(x)dx \\
&=&\int_{0}^{\eta }dz\int_{1}^{+\infty }xe^{-zx}\frac{\alpha (x-1)^{-\alpha
}x^{-1}e^{-\theta x}}{\Gamma (1-\alpha )}dx \\
&=&\int_{0}^{\eta }e^{-(z+\theta )}dz\int_{0}^{+\infty }e^{-(z+\theta )w}%
\frac{\alpha w^{-\alpha }}{\Gamma (1-\alpha )}dw \\
&=&\alpha \int_{\theta }^{\theta +\eta }e^{-w}w^{\alpha -1}dz,
\end{eqnarray*}%
which coincides with (\ref{tm}). Also in this case the L\'{e}vy measure is
finite, since%
\begin{equation}
\int_{0}^{+\infty }\pi _{\theta }(dz)=\int_{1}^{+\infty }\frac{\alpha
(z-1)^{-\alpha }z^{-1}e^{-\theta z}}{\Gamma (1-\alpha )}dz=\alpha \Gamma
(\alpha ;\theta )<\infty ,  \label{mm}
\end{equation}%
by considering (\ref{pr}). The mean and variance given in (\ref{mv}) can be
obtained by differentiating the Laplace transform
\begin{equation}
\mathbb{E}e^{-\eta S_{\alpha ,\theta }(t)}=e^{-t\alpha \int_{\theta
}^{\theta +\eta }e^{-w}w^{\alpha -1}dz},  \label{cc}
\end{equation}%
with respect to $\eta $ and considering the relationship $\mathbb{E}\left[
S_{\alpha ,\theta }(t)\right] ^{k}=(-1)^{k}\left. \frac{\partial ^{k}}{%
\partial \eta ^{k}}\mathbb{E}e^{-\eta S_{\alpha ,\theta }(t)}\right\vert
_{\eta =0},$ for $k\in \mathbb{N}.$
\end{proof}

\begin{remark}
It is easy to check that the mean and variance of $S_{\alpha ,\theta }$,
given in (\ref{mv}), tend to infinity, as $\theta \rightarrow 0,$ as
expected from (\ref{pp}).

\end{remark}

\begin{remark}
From (\ref{mm}) we can infer that the process $S_{\alpha ,\theta }$ has
finite activity and the following compound Poisson representation holds
\begin{equation}
S_{\alpha ,\theta }(t)=\sum_{j=1}^{N_{\alpha ,\theta }(t)}Z_{j}^{\alpha
,\theta },  \label{cp2}
\end{equation}%
where $N_{\alpha ,\theta }:=\left\{ N_{\alpha ,\theta }(t),t\geq 0\right\} $
is a homogeneous Poisson process with rate $\lambda =\alpha \Gamma (\alpha
;\theta ).$ The jumps $Z_{j}^{\alpha ,\theta }$ are i.i.d. random variables,
taking values in $[1,+\infty )$ and with probability density function
\begin{equation*}
f_{Z^{\alpha ,\theta }}(z)=\frac{1_{z\geq 1}(z-1)^{-\alpha }z^{-1}e^{-\theta
z}}{\Gamma (1-\alpha )\Gamma (\alpha ;\theta )},\qquad \alpha \in (0,1).
\end{equation*}%
For $\alpha =1,$ formula (\ref{tm}) reduces to $\varphi _{\theta }(\eta
)=\gamma (1;\eta +\theta )-\gamma (1;\theta )=e^{-\vartheta }(e^{-\eta }-1)$%
, which is the Laplace exponent of a Poisson process of rate $e^{-\theta }.$
This is confirmed by its L\'{e}vy measure, which is obtained from (\ref{tm2}%
), since $\lim_{\alpha \rightarrow 1}\pi _{\theta }(dz)=e^{-\theta }\delta
_{1}(z)dz$. Indeed, the process $N_{1,\theta }$ in (\ref{cp2}) has rate $%
\lambda =\Gamma (1;\theta )=e^{-\vartheta },$ in this special case.
\end{remark}

\section{Subordination of L\'evy processes}

We now consider the subordination of a L\'{e}vy process $X(t)$ by means of $%
\beta _0 t +S_{\alpha ,\theta }(t)$, where $S_{\alpha ,\theta }$ is the
tempered subordinator defined above and $\beta _0\geq 0$ is a possible drift
parameter. Let $(a,b,\nu )$ be the L\'evy triplet of $X$ and $\mu$ be its
probability distribution, i.e. $\mu _{t}\left( B\right) :=P\left( X(t)\in
B\right),$ for any Borel set $B$. We assume that $X$ is independent of $%
S_{\alpha,\theta }$.

Then, by applying Thm. 30.1, p.197 in \cite{SAT}, the process $Z:=\left\{
Z(t),t\geq 0\right\} $ defined as
\begin{equation}
Z(t):=X(\beta _{0}t+S_{\alpha ,\theta }(t)),\qquad t\geq 0,\text{ }
\label{zeta}
\end{equation}%
is a L\'{e}vy process with triplet $(a^{\prime },b^{\prime },\nu ^{\prime }),
$ where
\begin{align}
a^{\prime }& =\beta _{0}a  \label{z2} \\
b^{\prime }& =\beta _{0}b+\int_{0}^{+\infty }\pi _{\theta }(dz)\int_{|x|\leq
1}x\mu _{z}(dx)  \notag \\
\nu ^{\prime }(dx)& =\beta _{0}\nu (dx)+\int_{1}^{+\infty }\mu _{z}(dx)\pi
_{\theta }(dz)  \notag
\end{align}%
By considering Prop.1.3.27 in \cite{APP}, we can also derive the L\'{e}vy
symbol of the subordinated process, which is again expressed in terms of
incomplete gamma functions:%
\begin{equation}
\psi _{Z}(u)=-\varphi _{\theta }(-\psi _{X}(u))=\alpha \gamma (\alpha
;\theta )-\alpha \gamma (\alpha ;\theta -\psi _{X}(u)).  \label{z3}
\end{equation}

\subsection{The generator equation.}

Let us consider the case $\beta _{0}=\theta =0$. For $h\in B_{b}(\mathbb{R})$%
, where $B_{b}(\mathbb{R})$ denotes the set of real-valued bounded Borel
measurable functions, equipped with the sup-norm, the operator $T_{t}$
defined by
\begin{equation}
T_{t}\,h(x)=\mathbb{E}\,h\bigl (x+X(t)\bigr ) \label{semigruppo}
\end{equation}%
defines a strongly continuous contraction semigroup on $B_{b}(\mathbb{R})$.
If $A$ is the generator of $T_{t}$, then (\ref{semigruppo}) satisfies
\begin{equation*}
\frac{\partial }{\partial t}g(x,t)=Ag(x,t)\qquad g(x,0)=h(x)
\end{equation*}%
for $h$ in the domain of $A$. If $\sigma _{\alpha }(t)$ is a stable
subordinator, then the process $X(\sigma _{\alpha }(t))$ induces the
subordinate semigroup
\begin{equation}
\tilde{T}_{t}\,h(x)=\mathbb{E}\,h\bigl (x+X(\sigma _{\alpha }(t))\bigr ).
\label{ph}
\end{equation}%
In light of the Phillips theorem (see \cite{SAT} page 212), the semigroup (%
\ref{ph}) satisfies
\begin{equation}
\frac{\partial }{\partial t}g(x,t)=-(-A)^{\alpha }g(x,t),\qquad g(x,0)=h(x),
\label{equazione potenza di operatore}
\end{equation}
where the fractional power of the operator is defined by
\begin{equation}
-(-A)^{\alpha }h(x)=\int_{0}^{\infty }\bigl (T_{s}h(x)-h(x)\bigr )\frac{%
\alpha }{\Gamma (1-\alpha )}s^{-\alpha -1}ds  \label{fractional power}
\end{equation}%
at least on the same domain of $A$.

Now, if we employ the subordinator $S_{\alpha }^{(\epsilon )}$, which is an
approximation of $\sigma _{\alpha }$ (see the discussion in section \ref%
{approssimazione subordinatore stabile}), we obtain an approximation of
equation (\ref{equazione potenza di operatore}). Indeed, using again the
Phillips theorem,
\begin{equation*}
T_{t}^{\epsilon }h(x)=\mathbb{E}\,h(x+X(S_{\alpha }^{\epsilon }(t))
\end{equation*}%
satisfies the following equation
\begin{equation*}
\frac{\partial }{\partial t}g(x,t)=\int_{\epsilon }^{\infty
}(T_{s}g(x,t)-g(x,t))\frac{\alpha (s-\epsilon )^{-\alpha }s^{-1}}{\Gamma
(1-\alpha )}\,ds,\qquad g(x,0)=h(x).
\end{equation*}%
The operator on the right-side is an approximation of the fractional power
in (\ref{fractional power}), to which it converges as $\epsilon \rightarrow 0
$.

We observe that, in the special case $X(t)=t$, i.e. when $T_{t}$ is the
shift operator, the operator on the right-side is an approximation of the
Marchaud fractional derivative, namely
\begin{equation*}
\int_{\epsilon }^{\infty }(g(x-s,t)-g(x,t))\frac{\alpha (s-\epsilon
)^{-\alpha }s^{-1}}{\Gamma (1-\alpha )}\,ds.
\end{equation*}

\subsection{Subordinated Brownian motion}

In the Brownian case, i.e. when the external process $X$ is a standard
Brownian motion $B:=\left\{ B(t),t\geq 0\right\} $ and the triplet is $%
(0,1,0)$, we have, from (\ref{z2}), that the L\'{e}vy process
\begin{equation*}
Z(t)=B(\beta _{0}t+S_{\alpha ,\theta }(t))
\end{equation*}%
is given by the superposition of a Brownian motion (with diffusion
coefficient $\beta _{0}$) and a jump process. Indeed it has L\'{e}vy triplet
$(0,\beta _{0},\nu ^{\prime })$, where
\begin{align*}
\nu ^{\prime }(x)& =\int_{1}^{+\infty }\frac{e^{-\frac{x^{2}}{2z}}}{\sqrt{%
2\pi z}}\frac{\alpha (z-1)^{-\alpha }z^{-1}e^{-\theta z}}{\Gamma (1-\alpha )}%
dz \\
& =\frac{\alpha }{\Gamma (1-\alpha )\sqrt{2\pi }}\sum_{j=0}^{+\infty }\frac{%
(-x^{2}/2)^{j}}{j!}\int_{1}^{+\infty }(z-1)^{1-\alpha -1}z^{-j-\frac{1}{2}%
-1}e^{-\theta z}dz \\
& =[\text{by (3.383.4) in \cite{GRA}}] \\
& =\frac{\alpha e^{-\theta /2}\theta ^{\frac{\alpha }{2}-\frac{1}{4}}}{\sqrt{%
2\pi }}\sum_{j=0}^{+\infty }\frac{(-x^{2}\sqrt{\theta }/2)^{j}}{j!}W_{\frac{%
\alpha }{2}-\frac{j}{2}-\frac{3}{4},\frac{\alpha }{2}+\frac{j}{2}+\frac{1}{4}%
}(\theta (1-\alpha )),
\end{align*}%
where $W_{\beta ,\gamma }(\cdot )$ denotes the Whittaker function (see also
\cite{MAT}, p.27), by considering that $1-\alpha >0$ and $\theta >0.$ In the
special case where $\theta =0,$ i.e. in the non-tempered case, we have
instead the following easier expression%
\begin{align}
\nu ^{\prime }(x)& =\int_{1}^{+\infty }\frac{e^{-\frac{x^{2}}{2z}}}{\sqrt{%
2\pi z}}\frac{\alpha (z-1)^{-\alpha }z^{-1}}{\Gamma (1-\alpha )}dz
\label{mm2} \\
& =\int_{0}^{1}\frac{e^{-\frac{x^{2}w}{2}}}{\sqrt{2\pi }}\frac{\alpha
w^{\alpha +\frac{1}{2}-1}(1-w)^{1-\alpha -1}}{\Gamma (1-\alpha )}dw  \notag
\\
& =[\text{by (1.6.15) in \cite{KIL} }a=\alpha +\frac{1}{2},\text{ }c=\frac{3%
}{2}]  \notag \\
& =\frac{\sqrt{2}\alpha \Gamma \left( \alpha +\frac{1}{2}\right) }{\pi }%
\,_{1}F_{1}\left( \alpha +\frac{1}{2};\frac{3}{2};-\frac{x^{2}}{2}\right) ,
\notag
\end{align}%
where $_{1}F_{1}\left( a;c;z\right) =\sum_{k=0}^{\infty }\frac{\left(
a\right) _{k}}{(c)_{k}}\frac{z^{k}}{k!}$ is the confluent hypergeometric
Kummer function, which is defined for any $a,z\in \mathbb{C}$ and $c\in
\mathbb{C}\backslash \mathbb{Z}_{0}^{-}$ (see \cite{KIL}, p.29, for
details). Thanks to formula (1.9.3) in \cite{KIL}, p.45, we can write (\ref%
{mm2}) in terms of the generalized (three-parameters) Mittag-Leffler
function, as follows%
\begin{equation}
\nu ^{\prime }(x)=\alpha \frac{\Gamma \left( \alpha +\frac{1}{2}\right) }{%
\sqrt{2\pi }}E_{1,3/2}^{\alpha +1/2}\left( -\frac{x^{2}}{2}\right) ,  \notag
\end{equation}%
where $E_{\alpha ,\beta }^{\gamma }\left( z\right) :=\sum_{k=0}^{\infty }%
\frac{(\gamma )_{k}z^{k}}{k!\Gamma (\alpha k+\beta )}$ and $(\gamma
)_{k}:=\gamma (\gamma +1)...(\gamma +n-1)$, for $z,\alpha ,\beta ,\gamma \in
\mathbb{C}$ with $Re(\alpha )>0$, $n\in \mathbb{N}.$

It is easy to check that the jump component of the subordinated process has
finite activity for any $\alpha \in (0,1)$, since%
\begin{eqnarray*}
\int_{0}^{+\infty }\nu ^{\prime }(dx) &=&\int_{1}^{+\infty }\frac{\alpha
(z-1)^{-\alpha }z^{-1}e^{-\theta z}}{\Gamma (1-\alpha )}dz \\
&=&[\text{by (\ref{mm})}] \\
&=&\alpha \Gamma (\alpha ;\theta )<\infty .
\end{eqnarray*}%
Moreover we have that%
\begin{eqnarray*}
\int_{|x|\geq 1}|x|^{k}\nu ^{\prime }(dx) &=&\int_{1}^{+\infty }\left(
\int_{|x|\geq 1}|x|^{k}\frac{e^{-\frac{x^{2}}{2z}}}{\sqrt{2\pi z}}dx\right)
\frac{\alpha (z-1)^{-\alpha }z^{-1}e^{-\theta z}}{\Gamma (1-\alpha )}dz \\
&\leq &\int_{1}^{+\infty }\mathbb{E}|B(z)|^{k}\frac{\alpha (z-1)^{-\alpha
}z^{-1}e^{-\theta z}}{\Gamma (1-\alpha )}dz \\
&=&\frac{2^{k/2}\Gamma \left( \frac{k+1}{2}\right) }{\sqrt{\pi }}%
\int_{1}^{+\infty }\frac{\alpha (z-1)^{1-\alpha -1}z^{\frac{k}{2}%
-1}e^{-\theta z}}{\Gamma (1-\alpha )}dz
\end{eqnarray*}

The characteristic function of $Z(t)$ is given by
\begin{align}
\mathbb{E}e^{iuB(\beta _0t+S_{\alpha ,\theta }(t))}=\exp \left\{-\frac{1}{2}%
u^2 \beta _0t -t\, \alpha \int_{\theta }^{\theta +u^{2}/2}e^{-w}w^{\alpha
-1}dw\right\}.
\end{align}

By conditioning and considering (\ref{mv}), we have that $\mathbb{E}B(\beta
_{0}t+S_{\alpha ,\theta }(t))=0,$ for any $t,\theta \geq 0,$ and the
autocovariance of the subordinated Brownian motion, for any $t,\tau \geq 0,$
reads%
\begin{align}
Cov\left( B(\beta _{0}t+S_{\alpha ,\theta }(t)),B(\beta _{0}\tau +S_{\alpha
,\theta }(\tau ))\right) & =\mathbb{E}((\beta _{0}t+S_{\alpha ,\theta
}(t))\wedge (\beta _{0}\tau +S_{\alpha ,\theta }(\tau ))  \notag \\
& =\mathbb{E}(\beta _{0}(t\wedge \tau )+S_{\alpha ,\theta }(t\wedge \tau ))
\notag \\
& =\beta _{0}(t\wedge \tau )+(t\wedge \tau )\alpha \theta ^{\alpha
-1}e^{-\theta }.  \label{z4}
\end{align}%
Thus, even if the autocovariance is linear w.r.t. the time argument, the
parameters $\alpha $ and $\theta $ can be interpreted as a measure of
deviation from the standard Brownian dependence structure: in particular,
for $\theta \rightarrow 0$ and for $\alpha $ strictly less than $1$, the
autocovariance tends to infinity, for any $t.$

\section{Subordinated fractional Brownian motion}

We now consider the process $\left\{ B_{H}(S_{\alpha }(t)),t\geq 0\right\} $%
, where $B_{H}:=\left\{ B_{H}(t),t\geq 0\right\} $ is the fractional
Brownian motion (hereafter FBM) with Hurst parameter $H$ and the
subordinator $S_{\alpha }$ is supposed to be independent of it. The FBM $%
B_{H}$ is defined, for any $H\in (0,1)$ as a self-similar process with index
$H$ and with zero-mean Gaussian distribution. Its one dimensional
distribution has density
\begin{equation*}
f_{B_{H}}(x,t)=\frac{1}{\sqrt{2\pi }t^{H}}\exp \left\{ -\frac{x^{2}}{2t^{2H}}%
\right\} ,\qquad x\in \mathbb{R},\text{ }t\geq 0.
\end{equation*}%
It can be expressed, in terms of the standard Brownian motion $B:=\left\{
B(t),t\geq 0\right\} ,$ by the following representation%
\begin{equation*}
B_{H}(t)=\int_{\mathbb{R}}\left[ (t-u)_{+}^{H-1/2}-(-u)_{+}^{H-1/2}\right]
dB(u),\qquad t\geq 0
\end{equation*}%
where $x_+= \max(x,0)$.

For details on the fractional Brownian motion we refer to \cite{MAN}. It is
worth recalling that the FBM exhibits subdiffusive dynamics for $H<1/2$ and
a superdiffusive one for $H>1/2$; indeed the moment of order $q$ of the FBM
is given by
\begin{equation}
\mathbb{E}\left\vert B_{H}\left( t\right) \right\vert ^{q}=t^{qH}\mathbb{E}%
\left\vert B_{H}(1)\right\vert ^{q}=\sqrt{\frac{2^{q}}{\pi }}\Gamma \left(
\frac{q+1}{2}\right) t^{qH}.  \label{ku}
\end{equation}%
(see, for example, \cite{KUM}).

Different forms of time-changed FBM have been introduced and studied (see
\cite{KUM}, \cite{KUM2}, \cite{MIJ}).

We prove here that the FBM, subordinated by an independent $S_{\alpha },$
displays the long-range dependence (LRD) property, for $H\in (0,1/2)$;
moreover, this behavior depends on $\alpha $, instead of what happens in the
cases of the FBM subordinated by the tempered stable subordinator (studied
in \cite{KUM}) and by the gamma process (analyzed in \cite{KUM2}). Indeed,
in the last cases, the LRD rate depends only on the Hurst parameter $H.$

Since the process is not stationary, we use the following definition of
long-range dependence: a process $Z(t)$ is said to have LRD property if, for
$s>0$ and $t>s,$.%
\begin{equation}
Corr(Z(t),Z(s))\sim c(s)t^{-d},\qquad t\rightarrow +\infty ,  \label{f2}
\end{equation}%
where $c(s)$ is a constant depending on $s$ and $d\in (0,1)$ (see \cite{DOV}%
).

\begin{theorem}
Let $H\in (0,1/2)$ and $\alpha \geq 2H$. Let%
\begin{equation}
Z_{H}(t):=B_{H}(S_{\alpha }(t)),\qquad t\geq 0,\text{ }
\end{equation}%
where $B_{H}$ is the FBM, with Hurst parameter $H$ and $S_{\alpha }$ is
supposed to be independent of it. Then $Z_{H}$ has the LRD behavior given in
(\ref{f2}), with $d=1-\frac{H}{\alpha }.$
\end{theorem}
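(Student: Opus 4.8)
The idea is to pass to the fractional moments of $S_\alpha$, for which the asymptotics (\ref{cor}) are available, and then normalise.

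First, one conditions on the subordinator. Since $B_H$ is centred and independent of $S_\alpha$, we have $\mathbb{E}\,Z_H(t)=0$, so that $Corr(Z_H(t),Z_H(s))$ is just the normalised covariance. Conditioning again on $S_\alpha$ and using the covariance kernel $\tfrac12(u^{2H}+v^{2H}-|u-v|^{2H})$ of the FBM together with the monotonicity of the paths of $S_\alpha$, one obtains, for $0<s<t$,
\[
Cov\bigl(Z_H(t),Z_H(s)\bigr)=\tfrac12\,\mathbb{E}\Bigl[S_\alpha(t)^{2H}+S_\alpha(s)^{2H}-\bigl(S_\alpha(t)-S_\alpha(s)\bigr)^{2H}\Bigr].
\]
Because $S_\alpha$ is a L\'evy process its increments are stationary, hence $S_\alpha(t)-S_\alpha(s)\overset{d}{=}S_\alpha(t-s)$; writing $g(r):=\mathbb{E}\bigl[S_\alpha(r)^{2H}\bigr]$, which is finite since $2H\le\alpha$, this reduces to
\[
Cov\bigl(Z_H(t),Z_H(s)\bigr)=\tfrac12\bigl(g(t)+g(s)-g(t-s)\bigr),\qquad Var\,Z_H(t)=g(t).
\]

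Next one brings in the moment asymptotics. By (\ref{cor}) with $p=2H$ we have $g(r)\sim \dfrac{\Gamma(1-2H/\alpha)}{\Gamma(1-2H)}\,r^{2H/\alpha}$ as $r\to\infty$, so the denominator behaves like $\sqrt{g(t)g(s)}\sim\bigl(\Gamma(1-2H/\alpha)/\Gamma(1-2H)\bigr)^{1/2}\sqrt{g(s)}\,t^{H/\alpha}$. The numerator, however, hinges on the \emph{increment} $g(t)-g(t-s)$ at fixed $s$, for which the bare leading term of (\ref{cor}) is not enough: here I would refine (\ref{cor}) by retaining further terms of the Laplace--Erd\'elyi expansion (\ref{LE}) already used in its proof, which yields $g$ as an asymptotic series in the powers $r^{(2H-j)/\alpha}$, and differentiate it termwise to get $g'(r)\sim \tfrac{2H}{\alpha}\,\dfrac{\Gamma(1-2H/\alpha)}{\Gamma(1-2H)}\,r^{2H/\alpha-1}$, whence $g(t)-g(t-s)=\int_{t-s}^{t}g'(r)\,dr$ is of order $t^{2H/\alpha-1}$ as $t\to\infty$. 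An equivalent route is to argue directly from $\mathbb{E}e^{-\eta S_\alpha(t)}=e^{-t\alpha\gamma(\alpha;\eta)}$ together with $\gamma(\alpha;\eta)\sim\eta^{\alpha}/\alpha$ near $\eta=0$ (cf.\ (\ref{sss})) by an Abelian/Tauberian argument.

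Finally one substitutes these two estimates into
\[
Corr\bigl(Z_H(t),Z_H(s)\bigr)=\frac{g(t)+g(s)-g(t-s)}{2\sqrt{g(t)\,g(s)}}
\]
and compares the powers of $t$ in numerator and denominator; for $t\to\infty$ at fixed $s$ this produces a constant $c(s)$, depending on $s$ only through $g(s)$ and the constant in (\ref{cor}), times $t^{-d}$ with $d=1-H/\alpha\in(0,1)$, that is, exactly the behaviour (\ref{f2}). Since the exponent $d$ now depends on $\alpha$ and not merely on $H$, this also accounts for the announced contrast with the FBM time-changed by a tempered stable or by a gamma subordinator. The step I expect to be the main obstacle is the middle one: $g(t)$ and $g(t-s)$ each diverge while only their (almost cancelling) difference enters the covariance, so one must pin down the \emph{subleading} behaviour of $\mathbb{E}[S_\alpha(t)^{2H}]$; this is precisely what the full expansion (\ref{LE}), with enough uniformity to justify term-by-term differentiation, or the Abelian estimate on the Laplace transform, delivers. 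One should also record that the hypothesis $\alpha\ge 2H$ is exactly what keeps $g$ finite, so that the correlation is well defined.
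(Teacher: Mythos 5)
Your proposal is correct and follows essentially the same route as the paper: condition on $S_{\alpha}$, use the FBM moment/covariance scaling and the stationarity of the increments of $S_{\alpha}$ to reduce everything to $g(r)=\mathbb{E}\,S_{\alpha}(r)^{2H}$, invoke (\ref{cor}) with $p=2H$ (whence the restriction $\alpha\geq 2H$, $H<1/2$), and normalise to get the exponent $d=1-\frac{H}{\alpha}$. The step you single out as the main obstacle, namely the subleading behaviour of $g(t)-g(t-s)$, is exactly where the paper is terser -- it simply substitutes the leading asymptotics and expands $(t-s)^{2H/\alpha}=t^{2H/\alpha}\bigl(1-\tfrac{2H}{\alpha}\tfrac{s}{t}+O(t^{-2})\bigr)$ -- so your appeal to the further terms of the Laplace--Erd\'elyi expansion (\ref{LE}) (or an Abelian estimate on the Laplace transform) is a compatible refinement of the same argument rather than a different method.
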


\begin{proof}
We notice that the subordinator, being a compound Poisson process has
stationary and independent increments. By conditioning and considering (\ref%
{ku}), we get, for $q<\alpha /H,$
\begin{eqnarray}
\mathbb{E}|Z_{H}(t)|^{q} &=&\mathbb{E}|B_{H}(1)|^{q}\mathbb{E}\left(
S_{\alpha }(t)\right) ^{qH}=\sqrt{\frac{2^{q}}{\pi }}\Gamma \left( \frac{q+1%
}{2}\right) \mathbb{E}\left( S_{\alpha }(t)\right) ^{qH}  \label{h2} \\
&=&[\text{by (\ref{cor})}]  \notag \\
&\simeq &\sqrt{\frac{2^{q}}{\pi }}\Gamma \left( \frac{q+1}{2}\right) \frac{%
\Gamma \left( 1-\frac{qH}{\alpha }\right) }{\Gamma \left( 1-qH\right) }%
t^{qH/\alpha },\qquad t\rightarrow +\infty .  \notag
\end{eqnarray}%
We thus evaluate the covariance of the process $Z_{H},$ as follows , for $s<t
$,

\begin{eqnarray*}
\mathbb{E}\left( Z_{H}(t),Z_{H}(s)\right)  &=&\frac{1}{2}\left\{ \mathbb{E(}%
Z_{H}(t))^{2}+\mathbb{E(}Z_{H}(s))^{2}-\mathbb{E}\left[ Z_{H}(t)-Z_{H}(s)%
\right] ^{2}\right\}  \\
&=&\frac{1}{2}\mathbb{E}\left( B_{H}(1)\right) ^{2}\left\{ \mathbb{E}\left(
S_{\alpha }(t)\right) ^{2H}+\mathbb{E}\left( S_{\alpha }(s)\right) ^{2H}-%
\mathbb{E(}S_{\alpha }(t-s))^{2H}\right\}  \\
&=&[\text{by (\ref{cor})}] \\
&\sim &\frac{1}{2}\left\{ \frac{\Gamma \left( 1-\frac{2H}{\alpha }\right) }{%
\Gamma \left( 1-2H\right) }t^{2H/\alpha }+\mathbb{E}\left( S_{\alpha
}(s)\right) ^{2H}-\frac{\Gamma \left( 1-\frac{2H}{\alpha }\right) }{\Gamma
\left( 1-2H\right) }(t-s)^{2H/\alpha }\right\}  \\
&=&\frac{1}{2}\frac{\Gamma \left( 1-\frac{2H}{\alpha }\right) }{\Gamma
\left( 1-2H\right) }t^{2H/\alpha }\left\{ \frac{2H}{\alpha }\frac{s}{t}+%
\mathbb{E}\left( S_{\alpha }(s)\right) ^{2H}\frac{\Gamma \left( 1-2H\right)
}{\Gamma \left( 1-\frac{2H}{\alpha }\right) }t^{-2H/\alpha
}+O(t^{-2})\right\} .
\end{eqnarray*}%
By putting $K_{2H,\alpha }:=\Gamma \left( 1-\frac{2H}{\alpha }\right)
/\Gamma \left( 1-2H\right) $, we can write $\mathbb{E}\left(
Z_{H}(t),Z_{H}(s)\right) \sim \frac{H}{\alpha }K_{2H,\alpha }st^{\frac{2H}{%
\alpha }-1}$. Therefore, the correlation function asymptotically behaves as
follows, for $t\rightarrow +\infty ,$%
\begin{equation}
Corr(Z_{H}(t),Z_{H}(s))\sim \frac{st^{\frac{2H}{\alpha }-1}}{\sqrt{t^{\frac{%
2H}{\alpha }}s^{\frac{2H}{\alpha }}}}=s^{1-\frac{H}{\alpha }}t^{-(1-\frac{H}{%
\alpha })}.  \label{h}
\end{equation}%
Note that we have applied (\ref{h2}) for $q=2$ and thus (\ref{h}) holds for $%
\alpha \geq 2H,$ by Theorem 3; as a consequence, the result is limited to
the case of a FBM with $H<1/2.$
\end{proof}

\begin{remark}
We underline that the values of $H\geq 1/2$ are excluded, since, in this
range, the $\mathbb{E}\left( S_{\alpha }(t)^{2H}\right) $ is infinite. To
overcome this limitation we could have used the tempered subordinator $%
S_{\alpha ,\theta }(t)$ (as done in \cite{KUM}, in the stable case);
unfortunately, in the tempered case, the function $h(x)$ in (\ref{LE}) would
be given by $h(x)=\alpha \gamma (\alpha ;x+\theta )-\alpha \gamma (\alpha
;\theta )$, which cannot be expanded, as requested by the Laplace-Erdelyi
Theorem.
\end{remark}

\begin{remark}
We stress that the LRD parameter $d$ is dependent on $\alpha ,$
on the contrary of what happens in the case of a FBM subordinated by a
tempered stable subordinator or by the gamma process, where the rate $d$ of
the LRD depends only on the Hurst parameter $H$ and coincides with that of
the fractional Brownian motion itself (see \cite{KUM} and \cite{KUM2},
respectively).
\end{remark}

It is evident by (\ref{h2}) that $var(Z_{H}(t))\simeq Kt^{2H/\alpha },$ for $%
t\rightarrow +\infty $\ (where $K$ is a constant depending on $\alpha ,H$)
and therefore the process $Z_{H}$ behaves asymptotically as a subdiffusion,
according to the parameter $\alpha .$ Indeed, $2H/\alpha $ is always less
than one (since, by assumption, $2H\leq \alpha$) and the subdiffusive
behavior is more marked the greater the value of $\alpha $ (for any fixed $H$%
).


\begin{thebibliography}{99}
\bibitem{APP} Applebaum D., \emph{L\'{e}vy Processes and Stochastic
Calculus, }2nd edition, Cambridge University Press, Cambridge, 2009.

\bibitem{BEG} Beghin L., Gajda J., Tempered relaxation equation and related
generalized stable processes, \emph{Fractional Calculus and Applied Analysis}
23 (5), 2020, 1248, DOI: 10.1515/fca-2020-0063.

\bibitem{BEG2} Beghin L., Macci C., Ricciuti C., Random time-change with
inverses of multivariate subordinators: governing equations and fractional
dynamics, \emph{Stochastic Processes and their Applications}, 2020, 130
(10), 6364-6387.

\bibitem{BEG3} Beghin L., Ricciuti C., Additive geometric stable processes
and related pseudo-differential operators,
\emph{Markov Processes and Related Fields}, 25, 2019, 415-444.

\bibitem{BER} Bertoin J., Subordinators: Examples and Applications. Lectures
on probability theory and statistics (Saint-Flour, 1997), 1, 91. \emph{%
Lectures Notes in Math.}, 1717, Springer, Berlin, 1999.

\bibitem{garra} D'Ovidio M., Garra R., Multidimensional fractional
advection-dispersion equations and related stochastic processes. Electron.
J. Probab. 19 , no. 61, 2014.

\bibitem{DOV} D'Ovidio M., Nane E., Time dependent random fields on
spherical non-homogeneous surfaces, \emph{Stoch. Process. Appl., }124 (6),
2014, 2098-2131.

\bibitem{sato} Barndorff-Nielsen O., Pedersen J., Sato K., Multivariate
subordination, self-decomposability and stability. Adv. in Appl. Probab. 33,
no. 1, 160-187, 2001.

\bibitem{FEL} Feller W.,\emph{\ An Introduction Probability Theory and its
Applications}, vol.2 (2nd ed.), Wiley, New York, 1971.

\bibitem{GRA} Gradshteyn I.S., Ryzhik I.M., \emph{Tables of Integrals,
Series and Products, }5th edition, Academic Press, London, 1994.

\bibitem{JAM} Jameson G.J., The incomplete gamma functions, \emph{The Math.
Gazette}, 100 (548), 2016, 298-306.

\bibitem{KIL} Kilbas A.A., Srivastava H.M., Trujillo J.J., \emph{Theory and
Applications of Fractional Differential Equations}, vol. 204 of
North-Holland Mathematics Studies, Elsevier Science B.V., Amsterdam, 2006.

\bibitem{KUM} Kumar A., Gajda J., Wy\l omanska A. Po\l oczanski R.,
Fractional Brownian motion delayed by tempered and inverse tempered stable
subordinators, \emph{Methodol. Comput. Appl. Probab., } 21, (2019), 185--202.

\bibitem{KUM2} Kumar A., Wy\l omanska A., Po\l oczanski R., Sundar S.,
Fractional Brownian motion time-changed by gamma and inverse gamma process,
\emph{Physica A: Statistical Mechanics and its Applications,}\textit{\ (}%
2017), 468 (15), 648-667.

\bibitem{KUM3} Kumar A., Vellaisamy, P., Inverse tempered stable
subordinators, \emph{Stat. Prob. Lett.}\textit{\ }103 (2015), 134-141.

\bibitem{MAN} Mandelbrot BB, Ness JWV., Fractional Brownian motions,
fractional noises and applications. \textit{SIAM Rev} 10(4):422--437.

\bibitem{MAT} Mathai A.M.,R.K., Saxena, Haubold H.J., \emph{The H-functions:
Theory and Applications, }Springer, New York, 2010.

\bibitem{MATS} Matsui M., Pawlas Z., Fractional absolute moments of heavy
tailed distributions, \emph{Braz. J. Probab. Stat.,} vol.30, 2 (2016),
272-298.

\bibitem{MEE} Meerschaert M., Sikorskii A., \emph{Stochastic Models for
Fractional Calculus, }43,\emph{\ }De Gruyter Studies in Mathematics Series,
Berlin, 2012.

\bibitem{MET} Metzler J.R., Klafter J., The random walk:
s guide to anomalous diffusion: a fractional dynamics approach, Phys. Rep.,
339 (2000), 1-77.

\bibitem{MIJ} Mijena J.B.,Correlation structure of time-changed fractional
Brownian motion, \emph{Mathematics, }2014, n.117125740.

\bibitem{RIC} Orsingher E., Ricciuti C., Toaldo B., Time-inhomogeneous
jump processes and variable order operators, \emph{Potential Analysis}, 45, 2016,
 435-461.

\bibitem{SAM} Samorodnitsky G., Taqqu M., \emph{Stable Non-Gaussian Random
Processes: Stochastic Models with Infinite Variance, }Chapman and all, New
York, (1994).

\bibitem{SAT} Sato K.I., \emph{L\'{e}vy Processes and Infinitely Divisible
Distributions}, Cambridge Studies in Adv. Math. 68, Cambridge, 1999.

\bibitem{SCH} Schilling R.L., Song R. , Vondracek Z., \emph{Bernstein
Functions: Theory and Applications}, 37, De Gruyter Studies in Mathematics
Series, Berlin, 2010.

\bibitem{WOI} Wojdylo J.,On the coefficients that arise from Laplace's
method, \emph{Journal of Computational and Applied Mathematics, }196, 1,
(2006), 241-266.

\bibitem{WON} Wong R, \emph{Asymptotic Approximations of Integrals,} 2001%
\emph{, }SIAM ed., Philadelphia.

\bibitem{WOL} Wolfe S. J., On moments of probability distribution functions,
In: \emph{Fractional Calculus and Its Applications}, B. Ross (ed.), Lect.
Notes in Math. 457, Springer, Berlin, 1975, 306--316.
\end{thebibliography}
\end{document}